\newtheorem{theorem}{Theorem}[section]
\newtheorem{proposition}[theorem]{Proposition}
\theoremstyle{definition}
\newtheorem{definition}[theorem]{Definition}
\theoremstyle{remark}
\newtheorem{remark}[theorem]{Remark}
\numberwithin{equation}{section}
\begin{document}
\nocite{*}

\title{Perfect 2-colorings of the generalized Petersen graph GP(n,3)}

\author{Hamed Karami\\ 
School of Mathematics \\
Iran University of Science and Technology \\
Narmak, Tehran 16846,
Iran\\
hkarami@alum.sharif.edu
}
\date{}
\maketitle
\begin{abstract}
  In this paper we enumerate the parameter matrices of all perfect 2-colorings of the generalized Petersen graphs $GP(n,3)$, where $n\geq 7$. We also give some basic results for $GP(n,k)$.
  \\
  \vspace*{0.1cm}
  \\
 AMS 2010 Subject Classification: 05C15
  \\
  \vspace*{0.1cm}
  \\
  \textit{Keywords:} Perfect Coloring; Equitable Partition; Generalized Petersen graphs
  \end{abstract}
  \section{Introduction}
The theory of error-correcting codes has always been a popular subject in group theory, combinatorial configuration, covering problems and even diophantine number theory. So, mathematicians always show a lot of interest in this historical research field. The problem of finding all perfect codes was begun by M. Golay in 1949. Perfect code is originally a topic in the theory of error-correcting codes. All perfect codes are known to be completely regular, which were introduces by Delsarte in 1973. A set of vertices say C of a simple graph is called completely regular code with covering radius 
$\rho$,
if the distance partition of the vertex set with respect C is equitable. Therefore, the problem of existence of equitable partitions in graph is of great importance in graph theory. There is another term for this concept in the literature as "perfect m-coloring".\\
As explained above, enumerating parameter matrices in graphs is a key problem to find perfect codes in graphs. For example, by the results of this paper, we can easily conclude that the graph
$GP(9,3)$
has just two nontrivial completely regular codes with the size of 9, which neither of them is perfect. There has always been a notably interest in enumerating parameter matrices of some popular families of graphs "johnson graphs", "hypercube graphs" and recently "generalized petersen graphs" (see \cite{8,9,10,1,2,5,4,6,3}).\\
In this article, all parameter matrices of 
$GP(n,3)$
are enumerated.\\
\textbf{Acknowledgments.} The author would like to thank Yazdan Golzadeh for giving comment on the second part of the theorem \ref{2}. He is also thankful to Mehdi Alaeiyan for his help to enumerating parameter matrices of perfect 2-colorings of 
$GP(n,2)$ graphs, where we obtained some results that were useful for this paper too. 
\section{Definition and Concepts}
In this section, some basic definitions and concepts are given.
\begin{definition}
The \textit{generalized petersen graph}
$GP(n,k)$, also denoted
$P(n,k)$, for 
$n\geq 3$
and
$1\leq k  < \dfrac{n}{2}$,
is a connected cubic graph that has vertices, respectively, edges given by
\begin{center}
$V ( GP(n,k)  )=\lbrace a_i,b_i: 0 \leq i \leq n-1\rbrace ,\qquad\qquad\quad$\\
$E ( GP(n,k) )= \lbrace a_ia_{i+1}, a_ib_i, b_ib_{i+k}: 0\leq i \leq n-1\rbrace$,
\end{center}
\end{definition}
These graphs were introduced by Coxeter (1950) and named by Watkins (1969). $GP(n,k)$
is isomorphic to 
$GP(n,n-k)$. It is why we consider
$k<\dfrac{n}{2}$, with no restriction of generality.
\begin{definition}
For a graph
$G$
and an integer
$m$, a mapping 
$T:V(G)\rightarrow \lbrace 1,\cdots ,m\rbrace$
is called a perfect $m$-coloring with matrix
$A=(a_{ij})_{i,j\in \lbrace 1,\cdots ,m\rbrace }$,
if it is surjective, and for all 
$i,j$,
for every vertex of color
$i$,
the number of its neighbors of color
$j$ is equal to 
$a_{ij}$. The matrix
$A$
is called the \textit{parameter matrix}
of a perfect coloring. In the case $m = 2$,
we call the first color \textit{white}, and the second color \textit{black}. 
\end{definition}
\begin{remark}
In this paper, we consider all perfect 2-colorings, up to renaming the colors; i.e, we identify the perfect 2-coloring with the matrix
$$\begin{bmatrix}
a_{22}& a_{21}\\
a_{12} & a_{11}
\end{bmatrix},
$$
obtained by switching the colors with the original coloring.
\end{remark}
\section{The Existence of Perfect 2-Colorings of $GP(n,3)$}
In this section, we first present some results concerning necessary conditions for the existence of perfect 2-colorings of $GP(n,k)$ graphs with a given parameter matrix $A=(a_{ij})_{i,j=1,2}$, and then we enumerate the parameters of all perfect 2-colorings of $GP(n,2)$.
\\
The simplest necessary condition for the existence of a perfect 2-colorings of $GP(n,k)$ with the matrix 
$\begin{bmatrix}
a_{11}& a_{12}\\
a_{21} & a_{22}
\end{bmatrix}
$ is
$$a_{11}+a_{12}=a_{21}+a_{22}=3.$$
Also, it is clear that neither $a_{12}$ nor $a_{21}$ cannot be equal to zero, otherwise white and black vertices of $GP(n,k)$ would not be adjacent, which is impossible, since the graph is connected.\\
By the given conditions, we can see that a parameter matrix of a perfect 2-coloring of $GP(n,k)$ must be one of the following matrices:
\begin{center}
$
A_1=
\begin{bmatrix}
2 & 1\\
1 & 2
\end{bmatrix},
A_2=
\begin{bmatrix}
2 & 1\\
2 & 1
\end{bmatrix},
A_3=
\begin{bmatrix}
1 & 2\\
2 & 1
\end{bmatrix},$
\\
$A_4=
\begin{bmatrix}
0 & 3\\
1 & 2
\end{bmatrix},
A_5=
\begin{bmatrix}
0 & 3\\
2 & 1
\end{bmatrix},
A_6=
\begin{bmatrix}
0 & 3\\
3 & 0
\end{bmatrix}.$
\end{center}
The next proposition gives a formula for calculating the number of white vertices in a perfect 2-coloring (see \cite{1}).
\begin{proposition}
\label{1}
If $W$ is the set of white vertices in a perfect 2-coloring of a graph $G$ with matrix $A=(a_{ij})_{i,j=1,2}$, then
$$\vert W \vert =\vert V(G) \vert \dfrac{a_{21}}{a_{12}+a_{21}}.$$
\end{proposition}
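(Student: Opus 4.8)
The plan is to count, in two different ways, the number of edges of $G$ that have one white endpoint and one black endpoint; this classical double-counting argument converts the local data recorded in the matrix $A$ into the global ratio asserted by the proposition. Let me write $B=V(G)\setminus W$ for the set of black vertices, so that $|V(G)|=|W|+|B|$, and let $e(W,B)$ denote the number of edges joining a white vertex to a black vertex.

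First I would count $e(W,B)$ from the white side. By the definition of a perfect $2$-coloring, every white vertex (a vertex of color $1$) has exactly $a_{12}$ neighbors of color $2$, i.e. exactly $a_{12}$ black neighbors. Summing over all white vertices counts each white--black edge exactly once from its white endpoint, giving $e(W,B)=a_{12}\,|W|$. Next I would count the same quantity from the black side: every black vertex has exactly $a_{21}$ white neighbors, and summing over black vertices yields $e(W,B)=a_{21}\,|B|$. Equating the two expressions gives the balance relation
$$a_{12}\,|W|=a_{21}\,|B|=a_{21}\bigl(|V(G)|-|W|\bigr).$$

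From here the result follows by routine algebra: expanding and collecting the terms in $|W|$ yields $(a_{12}+a_{21})\,|W|=a_{21}\,|V(G)|$, and dividing through produces the claimed formula. The only thing to verify before dividing is that $a_{12}+a_{21}\neq 0$; but as already noted in the discussion preceding the proposition, neither $a_{12}$ nor $a_{21}$ can vanish in a perfect $2$-coloring of a connected graph, so the denominator is strictly positive and the division is legitimate. There is no substantive obstacle here—the entire content is the double count, and the remaining step is purely mechanical—so I expect the proof to be short; the only point meriting a word of care is the implicit nonvanishing of $a_{12}+a_{21}$.
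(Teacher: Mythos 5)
Your double-counting argument is correct and is the standard proof of this fact; the paper itself gives no proof, simply citing the result from the literature, so there is nothing to diverge from. Your added remark that $a_{12}+a_{21}\neq 0$ (guaranteed here by connectivity and surjectivity of the coloring) is the right point of care and is consistent with the discussion preceding the proposition.
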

Now, we are ready to enumerate the parameter matrices of all perfect 2-colorings of $GP(n,3)$. In \cite{8} , the parameter matrices of all perfect 2-colorings of $GP(n,k)$ with the matrices of $A_1$ and $A_6$ are enumerated. So, we just present theorems in order to enumerate parameter matrices corresponding to perfect 2-colorings of $GP(n,3)$ with the matrices $A_2$, $A_3$, $A_4$, and $A_5$.
\subsection*{Perfect 2-colorings of $GP(n,3)$ with the matrix $A_2$:}
In this part, we show that the graphs $GP(n,3)$ have no perfect 2-colorings with the matrix $A_2$.
\begin{theorem}
The graphs $GP(n,3)$ have no perfect 2-colorings with the matrix $A_2$.
\end{theorem}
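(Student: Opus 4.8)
The plan is to encode the matrix $A_2$ as a single linear condition and then resolve it with a discrete Fourier (character-sum) argument, so that no divisibility hypothesis or case analysis on $n$ is needed. Since $a_{12}=a_{22}=1$, a coloring with matrix $A_2$ is exactly an assignment in which \emph{every} vertex, white or black, has precisely one black neighbour. Writing $x_i,y_i\in\{0,1\}$ for the indicators that $a_i$, respectively $b_i$, is black, the cubic structure of $GP(n,3)$ turns this into two families of equations, for every $i\in\ZZ_n$:
\begin{equation}
x_{i-1}+x_{i+1}+y_i=1, \qquad y_{i-3}+y_{i+3}+x_i=1 .
\end{equation}
As a sanity check, summing either family gives $2B_a+B_b=B_a+2B_b=n$, whence $B_a=B_b=n/3$ and $3\mid n$ is forced, recovering Proposition~\ref{1}; but the argument below will not even need this.

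Next I would fix a nontrivial $n$-th root of unity $\zeta$ and set $\hat x=\sum_i \zeta^i x_i$, $\hat y=\sum_i\zeta^i y_i$. Multiplying the two equations by $\zeta^i$, summing over $i$, using the shifts $x_{i\pm1}\mapsto\zeta^{\mp1}\hat x$ and $y_{i\pm3}\mapsto\zeta^{\mp3}\hat y$, and using $\sum_i\zeta^i=0$ on the right, I obtain the homogeneous system
\begin{equation}
(\zeta+\zeta^{-1})\,\hat x+\hat y=0,\qquad \hat x+(\zeta^3+\zeta^{-3})\,\hat y=0 .
\end{equation}
Hence for every $\zeta\neq1$ whose determinant $D(\zeta)=(\zeta+\zeta^{-1})(\zeta^3+\zeta^{-3})-1$ is nonzero we must have $\hat x(\zeta)=\hat y(\zeta)=0$.

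The heart of the proof — and the step I expect to be the only real obstacle — is to show $D(\zeta)\neq0$ at \emph{every} root of unity. Setting $u=\zeta^2+\zeta^{-2}$ and expanding gives the clean form $D(\zeta)=u^2+u-3$, whose roots are $\tfrac{-1\pm\sqrt{13}}{2}$. Now $u=2\cos(4\pi k/n)$ is a real cyclotomic integer, and all of its algebraic conjugates are again of the form $2\cos(\cdot)$, hence lie in $[-2,2]$. But $x^2+x-3$ is irreducible over $\mathbb{Q}$ and has the root $\tfrac{-1-\sqrt{13}}{2}\approx-2.30$, of modulus exceeding $2$; since its two roots are Galois-conjugate, $u$ cannot equal either of them. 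Thus $D(\zeta)\neq0$ for all roots of unity (a standard fact about cyclotomic integers, in the spirit of Niven's theorem), and the degenerate directions simply never occur, for any $n$.

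Finally, with $\hat x(\zeta)=0$ for all $\zeta\neq1$, Fourier inversion forces $x_i$ to be constant in $i$; being $0$–$1$ valued it is then all $0$ or all $1$. Both are impossible: if $x\equiv0$ the first equation gives $y_i\equiv1$ and then the second reads $2\neq1$, while $x\equiv1$ already makes the first equation read $2+y_i=1$. This contradiction shows no perfect $2$-coloring with matrix $A_2$ exists. A more elementary route — classifying each index by the pair $(x_i,y_i)$, showing the black vertices occur in runs of length $\le2$ separated by gaps $\ge2$ on both the outer cycle and the three inner cycles, and then propagating — also seems viable, but the character-sum approach sidesteps the delicate alignment of those runs with the residues modulo $3$, which is exactly where a direct combinatorial attack threatens to bog down.
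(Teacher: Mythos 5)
Your argument is correct, and it takes a genuinely different route from the paper. The paper's proof is purely local and combinatorial: it first shows by a short chain of forced colour assignments that no two consecutive outer vertices $a_i,a_{i+1}$ can both be black, and then propagates colours from an assumed configuration $T(a_0)=T(b_0)=1$, $T(a_1)=2$ until some vertex is forced to carry both colours. You instead observe that the matrix $A_2$ says precisely that \emph{every} vertex has exactly one black neighbour, encode this as the circulant system $x_{i-1}+x_{i+1}+y_i=1$, $y_{i-3}+y_{i+3}+x_i=1$ over $\mathbb{Z}_n$, and kill it with a character sum. The steps all check: the determinant does simplify to $u^2+u-3$ with $u=\zeta^2+\zeta^{-2}$, this quadratic is irreducible over $\mathbb{Q}$ with one root of modulus exceeding $2$, and since every Galois conjugate of $\zeta^2+\zeta^{-2}$ is again of the form $\eta+\eta^{-1}$ for a root of unity $\eta$ and hence lies in $[-2,2]$, the determinant cannot vanish at any root of unity; Fourier inversion then forces $x$ and $y$ constant, which is immediately absurd (the system is inhomogeneous over $\mathbb{Z}_n$ only through the constant $1$, and neither constant value of $x$ is consistent). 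What your approach buys is uniformity in $n$ --- no case analysis, no tracking of residues --- and a slightly stronger conclusion: $GP(n,3)$ ($n\geq 7$) admits no $0$--$1$ assignment whatsoever in which every vertex has exactly one black neighbour, i.e., no efficient open dominating set, not merely no surjective perfect colouring. What the paper's approach buys is that it is entirely elementary and exhibits the explicit local obstruction, and the same propagation template is reused for the matrices $A_3$ and $A_4$, where the analogous circulant determinant does vanish at some roots of unity (e.g., at $\zeta=-1$ for $A_3$) and an algebraic argument alone would no longer close the case.
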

\begin{proof}
At first, we claim that for each perfect 2-coloring, say $T$, of $GP(n,3)$ with the matrix $A_2$, there are no consecutive vertices $a_i$ and $a_{i+1}$, such that $T(a_i)=T(a_{i+1})=2$. To prove it, suppose contrary to our claim, without loss of generality, there is a perfect 2-coloring, say $T$, of $GP(n,3)$ with the matrix $A_2$, such that $T(a_1)=T(a_2)=2$. It imeadiately gives $T(b_1)=T(b_2)=T(a_0)=T(a_3)=1$ and then $T(b_0)=T(b_3)=T(a_4)=T(b_4)=T(b_5)=1$. Now, from $T(a_3)=T(a_4)=T(b_4)=1$, we have $T(a_5)=2$. Next, from $T(a_4)=T(b_5)=1$ and $T(a_5)=2$, we get $T(a_6)=2$. It gives $T(b_6)=1$ which is a contradiction with $T(a_3)=T(b_3)=T(b_0)=1$.\\
Now, to prove the theorem, suppose the assertion is false. Therefore, there is a perfect 2-coloring, say $T$, of $GP(n,3)$ with the matrix $A_2$. By symmetry, with no loss of generality, we can assume $T(a_0)=T(b_0)=1$ and $T(a_1)=2$. By the above claim, we have $T(a_2)=1$. Now, from $T(a_0)=T(a_2)=1$ and $T(a_1)=2$, it follows that $T(b_1)=2$. This immediately gives $T(b_2)=T(a_3)=T(b_4)=1$ and, in consequence $T(a_4)=1$ and $T(b_3)=T(a_5)=2$. Again, by using the above claim, we get $T(a_6)=1$ and then $T(b_6)=1$, which is a contradiction with $T(a_3)=T(b_0)=1$ and $T(b_3)=2$. 
\end{proof}
\subsection*{Perfect 2-colorings of $GP(n,3)$ with the matrix $A_3$:}
We will show that the graphs $GP(2m,3)$ have a perfect coloring with the matrix $A_3$ and the graphs $GP(2m+1,3)$ have no perfect 2-colorings with the matrix $A_3$.
\begin{theorem}
All of the graphs $GP(n,3)$, where $n$ is even, have a perfect 2-coloring with the matrix $A_3$. Also, there are no perfect 2-colorings of $GP(n,3)$, where $n$ is odd, with the matrix $A_3$.
\end{theorem}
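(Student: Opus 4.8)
The plan is to treat the two assertions separately: the even case by an explicit construction, and the odd case by a short counting argument resting on Proposition \ref{1}.

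For the non-existence when $n$ is odd, I would first compute the number of white vertices from Proposition \ref{1}. Since $|V(GP(n,3))| = 2n$ and the matrix $A_3 = \begin{bmatrix} 1 & 2 \\ 2 & 1 \end{bmatrix}$ has $a_{21} = a_{12} = 2$, this gives $|W| = 2n \cdot \frac{2}{2+2} = n$. Next I would use the entry $a_{11} = 1$: it says that every white vertex has exactly one white neighbor, so the subgraph induced on the white set $W$ is $1$-regular. By the handshake lemma the sum of its degrees, namely $|W| = n$, must be even. Hence a perfect $2$-coloring with matrix $A_3$ can exist only when $n$ is even, and all odd $n$ are excluded at once.

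For the existence when $n$ is even, I would exhibit the alternating (parity) coloring $T(a_i) = T(b_i) = 1$ when $i$ is even and $T(a_i) = T(b_i) = 2$ when $i$ is odd, which is well-defined on $\mathbb{Z}_n$ precisely because $n$ is even. I would then check the four local conditions directly. A white vertex $a_i$ (with $i$ even) has outer neighbors $a_{i\pm 1}$ of odd index (black) and spoke neighbor $b_i$ of even index (white), i.e. one white and two black neighbors; a white vertex $b_i$ has spoke neighbor $a_i$ (white) and inner neighbors $b_{i\pm 3}$ of odd index (black), again one white and two black. The black cases are symmetric and yield two white and one black neighbor, so the counts reproduce $A_3$ exactly.

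Once the alternating coloring is guessed, the computations are routine, so I do not expect a genuine obstacle; the only point demanding care is the role of the parity of $n$, which surfaces in two guises. In the even case it is what makes the pattern consistent around the outer $n$-cycle and around the inner cycle(s), the key observation being that $b_{i\pm 3}$ always has index of opposite parity to $i$, so every inner edge is bichromatic. In the odd case it is exactly this same parity that the handshake argument leverages to forbid the $1$-regular white subgraph, and hence the coloring, from existing.
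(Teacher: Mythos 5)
Your proof is correct, and the non-existence half takes a genuinely different route from the paper. For odd $n$ the paper runs a local propagation argument: starting from $T(a_0)=T(b_0)=1$ it splits into two cases, chases forced colors around the graph, and finds that one case yields an outright contradiction while the other forces a period-$10$ pattern (hence $10\mid n$), contradicting oddness. You instead argue globally: Proposition \ref{1} gives $|W|=n$, the entry $a_{11}=1$ makes the white-induced subgraph $1$-regular (a perfect matching on $W$), and the handshake lemma forces $|W|$ even. Your argument is much shorter and uses nothing about $GP(n,3)$ beyond $|V|=2n$; it actually proves the general fact that any graph admitting a perfect $2$-coloring with matrix $A_3$ must have $|V|\equiv 0 \pmod 4$. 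What the paper's longer chase buys in exchange is structural information it does not strictly need for this theorem: it effectively classifies the possible colorings (the alternating one, plus a period-$10$ family), not merely the admissible values of $n$. Your existence construction for even $n$ is the same alternating coloring as the paper's, and your verification of the local counts is complete.
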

\begin{proof}
To prove the first part, consider the mapping $T:V(GP(2m,3))\rightarrow \lbrace 1,2 \rbrace$ by
\begin{center}
$T(a_{2i}) = T(b_{2i})=1,\quad \;\;\,$\\
$T(a_{2i+1})= T(b_{2i+1})=2.$
\end{center}
for $i \geq 0$. It can be easily seen that the given mapping is a perfect 2-coloring of $GP(2m,3)$ with the matrix $A_3$.\\
To prove the second part, contrary to our claim, suppose there is a perfect 2-coloring, say $T$, of $GP(n,3)$, where $n$ is odd, with the matrix $A_3$. By Lemma 3.4 in \cite{8}, with no loss of generality, we can assume $T(a_0)=T(b_0)=1$. By knowing that the given mapping in the first part is not a perfect 2-coloring with the matrix $A_3$, where $n$ is odd, we should have two cases below.\\
\textbf{Case 1:} For some positive integer $i$, $T(a_i)=T(b_i)=T(b_{i+1})=1$ and $T(a_{i+1})=2$. It immediately gives $T(a_{i+2})=2$ and $T(a_{i+3})=T(b_{i+2})=1$. From $T(a_{i+2})=T(b_i)=1$, we deduce that $T(b_{i+3})=2$ and then $T(a_{i+4})=1$. Next, from $T(b_{i+1})= T(a_{i+4})=1$, we have $T(b_{i+4})=T(a_{i+5})=2$. Now, from $T(b_i)=T(a_{i+3})=1$, and $T(b_{i+3})=2$, it follows that $T(b_{i+6})=2$, and then from $T(a_{i+5})=2$, we get $T(a_{i+6})=1$ and $T(b_{i+5})=2$. Using this argument, for $j \geq 0$, we have
\begin{align*}
T(a_{10j+i})=T(b_{10j+i})=T(b_{10j+i+1})=T(b_{10j+i+2})=T(a_{10j+i+3})=T(a_{10j+i+4})=\qquad\qquad\,\quad &\\ \qquad \qquad \qquad T(a_{10j+i+6})=T(a_{10j+i+7})=T(b_{10j+i+7})=T(b_{10j+i+8})=T(b_{10j+i+9})=1.
\end{align*}
and
\begin{align*}
T(a_{10j+i+1})=T(a_{10j+i+2})=T(b_{10j+i+3})=T(b_{10j+i+4})=T(a_{10j+i+5})=T(b_{10j+i+5})=\qquad\,\,\quad &\\ T(b_{10j+i+6})=T(b_{10j+i+7})=T(a_{10j+i+8})=T(a_{10j+i+9})=2.
\end{align*}
It gives $n=10m$ which contradicts $n$ is odd.\\
\textbf{Case 2:} For some positive integer $i$, $T(a_i)=T(b_i)=T(a_{i+2})=1$ and $T(a_{i+1})=T(b_{i+1})=T(b_{i+2})=2$. It immediately gives $T(a_{i+3})=1$ and then $T(a_{i+4})=T(b_{i+3})=2$. From $T(a_{i+4})=T(b_{i+1})=2$,  we have $T(b_{i+4})=1$ and then we deduce that $T(a_{i+5})=2$ and $T(b_{i+5})=T(a_{i+6})=1$. From $T(a_{i+3})=T(b_i)=1$ and $T(b_{i+3})=2$, we get $T(b_{i+6})=2$. Now, from $T(a_{i+5})=T(b_{i+6})=2$ and $T(a_{i+6})=1$, we have $T(a_{i+7})=1$ and then $T(b_{i+7})=2$ which is a contradiction of $T(b_{i+4})=1$ and $T(a_{i+4})=T(b_{i+1})=T(b_{i+7})=2$.
\end{proof}
\subsection*{Perfect 2-colorings of $GP(n,3)$ with the matrix $A_4$:}
We show that just the graphs $GP(4m,3)$ among the graphs $GP(n,3)$ have a perfect 2-coloring with the matrix $A_4$. 
\begin{theorem}
\label{2}
All the graphs $GP(n,3)$, where $4 \mid n$, have a perfect 2-coloring with the matrix $A_4$. Also, there are no perfect 2-coloring of $GP(n,3)$, where $4\nmid n$, with this matrix.
\end{theorem}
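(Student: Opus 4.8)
My plan is to treat the two assertions separately, giving an explicit coloring when $4\mid n$ and a short double-counting argument when $4\nmid n$.

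For the existence part, I would define $T:V(GP(n,3))\to\{1,2\}$ by declaring $a_i$ white exactly when $i\equiv 0\pmod 4$ and $b_i$ white exactly when $i\equiv 2\pmod 4$, all other vertices being black. Since $4\mid n$, the residue of an index modulo $4$ is unchanged when the index is reduced modulo $n$, so $T$ is well defined on $\mathbb{Z}_n$; this is precisely the point at which divisibility by $4$ is used. It then remains to verify the matrix $A_4$ by running through the residue classes: a white $a_i$ ($i\equiv 0$) has neighbors $a_{i\pm1}$ (residues $1,3$) and $b_i$ (residue $0$), all black, and a white $b_i$ ($i\equiv 2$) has neighbors $a_i$ (residue $2$) and $b_{i\pm 3}$ (residues $1,3$), all black, so every white vertex has $0$ white and $3$ black neighbors; a similar pass over the remaining classes shows every black vertex has exactly one white neighbor. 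This verification is routine casework that I would not belabor.

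For the nonexistence part, the key observation is that the matrix $A_4$ forces the white set $W$ to be an efficient dominating set: since $a_{11}=0$ and $a_{21}=1$, every vertex of $GP(n,3)$ has exactly one white vertex in its closed neighborhood. Writing $w_a$ and $w_b$ for the numbers of white outer and inner vertices, I would sum this ``exactly one'' condition over all outer vertices $a_i$. The right-hand side is $n$; on the left, each white $a_j$ is counted three times (it lies in the closed neighborhoods of $a_{j-1},a_j,a_{j+1}$) while each white $b_j$ is counted once (only in the closed neighborhood of $a_j$), giving $3w_a+w_b=n$. Summing the same condition over all inner vertices $b_i$ yields, symmetrically, $w_a+3w_b=n$. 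Subtracting gives $w_a=w_b$, whence $4w_a=n$ and therefore $4\mid n$. Thus no coloring with matrix $A_4$ exists when $4\nmid n$.

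The only real care needed is in the double count: one must check the multiplicities $3$ and $1$ against the adjacency structure, i.e. that an outer vertex $a_j$ sits in exactly the three closed neighborhoods $N[a_{j-1}],N[a_j],N[a_{j+1}]$ among outer vertices and in the single closed neighborhood $N[b_j]$ among inner vertices. As a sanity check, Proposition \ref{1} already gives $|W|=2n\cdot\frac14=n/2$, so odd $n$ is excluded immediately; the double count is what sharpens ``$n$ even'' to ``$4\mid n$''. Should the counting identity seem too slick, the same conclusion is reachable by the propagation method of the earlier theorems: assuming $a_0$ white forces the period-$4$ pattern above, with one branch of the first case distinction dying when some vertex is left with three black neighbors, and closing the cycle then requires $4\mid n$.
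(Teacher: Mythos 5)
Your proposal is correct, and the existence half is identical to the paper's (the same period-$4$ pattern, with the same routine neighbor check). The nonexistence half, however, takes a genuinely different route. The paper argues by local propagation: it assumes $T(a_0)=1$, repeatedly uses the row conditions $a_{11}=0$ and $a_{21}=1$ to force the colors of nearby vertices, and splits on $T(b_2)$ --- one branch reproduces the period-$4$ pattern (forcing $4\mid n$), the other dies in a contradiction a few steps later. You instead observe that $A_4$ makes the white set an efficient dominating set (every closed neighborhood contains exactly one white vertex) and double-count: summing over the outer rim gives $3w_a+w_b=n$, summing over the inner rim gives $w_a+3w_b=n$, hence $w_a=w_b$ and $4w_a=n$. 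Your multiplicities $3$ and $1$ are right (an outer $a_j$ lies in $N[a_{j-1}],N[a_j],N[a_{j+1}]$ and in $N[b_j]$ only; dually for $b_j$, where distinctness of $j-3,j,j+3$ modulo $n$ uses $n\geq 7$), so the argument is sound. Your counting proof is shorter and avoids all casework; what the paper's propagation buys in exchange is the stronger structural fact that the period-$4$ coloring is the \emph{only} perfect $2$-coloring with matrix $A_4$ up to rotation, which the global count cannot see. Your closing remark that the propagation method would also work is accurate --- that is exactly what the paper does.
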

\begin{proof}
For the first part, consider the mapping $T:V(GP(4m,3))\rightarrow \lbrace 1,2\rbrace$ by
\begin{center}
$T(a_{4i})=T(b_{4i+2})=1,\qquad\qquad\qquad\quad\;\;\,\qquad\qquad\qquad\qquad\qquad\quad\;\;\,$\\
$T(b_{4i})=T(a_{4i+1})=T(b_{4i+1})=T(a_{4i+2})=T(a_{4i+3})=T(b_{4i+3})=2.$
\end{center}
for $i \geq 0$. It can be easily checked that the given mapping is a perfect 2-coloring with the matrix 
$A_4$. \\
To prove the second part, contrary to our claim, suppose that there is a perfect 2-coloring of 
$GP(n,3)$ 
with the matrix 
$A_4$, say 
$T$.
with no restriction of generality, let 
$T(a_0)=1$. It follows that 
$T(a_1)=T(b_0)=T(a_{n-1})=T(b_{n-1})=2$.
From 
$T(a_1)=2$
and 
$T(a_0)=1$
we get 
$T(b_1)=T(a_2)=2$. Now, we should have two cases below.\\
\textbf{Case 1:} 
$T(b_2)=1$. It immediately gives 
\begin{center}
$T(a_{4i})=T(b_{4i+2})=1,\qquad\qquad\qquad\quad\;\;\,\qquad\qquad\qquad\qquad\qquad\quad\;\;\,$\\
$T(b_{4i})=T(a_{4i+1})=T(b_{4i+1})=T(a_{4i+2})=T(a_{4i+3})=T(b_{4i+3})=2.$
\end{center}
for $i \geq 0$. It clearly gives
$n=4m$ which is a contradiction of
$4\nmid n$.\\
\textbf{Case 2:}
$T(b_2)=2$. It immediately gives 
$T(a_3)=1$ and $T(b_3)=T(a_4)=2$. 
From 
$T(a_4)=2$
and 
$T(a_3)=1$,
we get
$T(b_4)=T(a_5)=2$. Then, from
$T(a_2)=T(b_2)=T(b_{n-1})=2$,
we have 
$T(b_5)=1$. So, we immidiately conclude that 
$T(a_6)=2$. Now, from
$T(b_0)=T(b_3)=2$ and
$T(a_3)=1$, we have 
$T(b_6)=2$. It gives 
$T(a_7)=1$ and then
$T(b_7)=2$ which is a contradiction of
$T(b_0)=T(b_4)=T(a_4)=2$.
\end{proof}
\subsection*{Perfect 2-colorings of $GP(n,3)$ with the matrix $A_5$:}
Here, we show that just the graphs $GP(5m,3)$, where $m\in \mathbb{N}$, among the graphs $GP(n,3)$ have a perfect 2-coloring with the matrix $A_5$.  
\begin{theorem}
The graphs $GP(5m,5t+2)$ and $GP(5m,5t+3)$, where $t\geq 0$,  have a perfect 2-coloring with the matrix
$A_5$. $GP(n,k)$ graphs for $n$ such that $5\nmid n$, have no perfect colorings with the matrix 
$A_5$.
\end{theorem}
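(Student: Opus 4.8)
The statement has two halves of very different character, and I would handle them separately. The non-existence half is immediate from Proposition \ref{1}: since $GP(n,k)$ is cubic with $|V|=2n$, any perfect $2$-coloring with the matrix $A_5$ must have
\[
|W| = 2n\cdot\frac{a_{21}}{a_{12}+a_{21}} = 2n\cdot\frac{2}{3+2} = \frac{4n}{5}
\]
white vertices. As $\gcd(4,5)=1$, this is a non-negative integer only when $5\mid n$; hence for $5\nmid n$ no such coloring exists. Note that this argument is insensitive to the value of $k$, which is exactly the generality claimed. So the real content of the theorem lies in the explicit construction for $n=5m$ with $k\equiv 2$ or $k\equiv 3\pmod 5$.

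For that construction I would define a coloring that is periodic of period $5$ in the index $i$, determined by $i\bmod 5$: declare $a_i$ white exactly when $i\equiv 0,3\pmod 5$ and $b_i$ white exactly when $i\equiv 1,2\pmod 5$, with all remaining vertices black. Because $n=5m$, this assignment closes up consistently around the outer cycle and along the spokes, and it places $4$ white vertices in each block of $5$ indices, matching the count $|W|=4n/5$ above. The single observation that lets one pattern serve both families is that the inner neighbors of $b_i$ are $b_{i-k}$ and $b_{i+k}$, and whether $k\equiv 2$ or $k\equiv 3\pmod 5$ one has $\{k\bmod 5,\,(-k)\bmod 5\}=\{2,3\}$; thus in either case the two inner neighbors of $b_i$ sit at residues $i+2$ and $i+3\pmod 5$, so the inner edges obey the same period-$5$ bookkeeping.

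It then remains to verify that this coloring realizes $A_5$, namely that every white vertex has all three of its neighbors black, while every black vertex has exactly two white and one black neighbor. By periodicity this reduces to checking the ten vertex types $a_0,\dots,a_4,b_0,\dots,b_4$, reading each vertex's three neighbors off the cycle edges $a_i a_{i\pm1}$, the spoke $a_i b_i$, and the inner edges, which by the observation above land at residues $i+2,i+3$. This is the bulk of the work but is entirely mechanical; for instance $a_0$ (white) has neighbors $a_4,a_1,b_0$, all black, while $b_4$ (black) has neighbors at residues $1,2$ together with $a_4$, i.e. white, white, black.

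The only place demanding care — and what I would treat as the main obstacle — is the inner-edge bookkeeping: one must confirm that reducing $i\pm k$ modulo $5$ faithfully records adjacency in the actual graph $GP(5m,k)$, and that the period-$5$ coloring wraps around consistently modulo $n=5m$. Once the identification $\{k,-k\}\equiv\{2,3\}\pmod 5$ is established, the case $k\equiv 3$ follows from the case $k\equiv 2$ with no extra effort, and the ten local checks finish the proof.
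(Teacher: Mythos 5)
Your proposal is correct and follows essentially the same route as the paper: the non-existence half is exactly the paper's appeal to Proposition \ref{1}, and your period-$5$ pattern is a cyclic shift of the paper's explicit construction (which colors $a_i$ white for $i\equiv 1,4$ and $b_i$ white for $i\equiv 2,3 \pmod 5$), verified by the same finite local check. Your remark that $\{k,-k\}\equiv\{2,3\}\pmod 5$ in both cases merely makes explicit why one pattern serves both $k\equiv 2$ and $k\equiv 3$, which the paper leaves implicit.
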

\begin{proof}
For the first part, consider the mapping
$T:V(GP(5m,5t+2))\rightarrow \lbrace 1,2\rbrace$ by
\begin{center}
$T(a_{5i})=T(a_{5i+2})=T(a_{5i+3})=T(b_{5i})=T(b_{5i+1})=T(b_{5i+4})=2,$\\
$T(a_{5i+1})=T(a_{5i+4})=T(b_{5i+2})=T(b_{5i+3})=1,\qquad\qquad\qquad\qquad$\\
\end{center}
for $i\geq 0$. It can be easily checked that the given mapping gives a perfect 2-coloring with the matrix 
$A_5$. The mapping $T:V(GP(5m,5t+3))\rightarrow \lbrace 1,2\rbrace$ by the exactly above definition is also a perfect 2-coloring with the matrix 
$A_5$. Moreover, the second part can be proved by Proposition \ref{1}.
\end{proof}
Finally, we summerize the obtained results from enumerating the parameter matrices of $GP(n,k)$ in the following table.
\begin{table}[!ht]
\centering \caption{Eumerating the parameter matrices of $GP(n,k)$}
\begin{tabular}{|c|c|c|c|}
\hline
&$GP(n,2)$&$GP(n,3)$&$GP(n,k)$\\
\hline
$A_1$& all graphs&all graphs&all graphs\\
\hline
$A_2$& just $GP(3m,2)$& no graphs& ?\\
\hline
$A_3$& no graphs& just $GP(2m,3)$& ?\\
\hline
$A_4$& no graphs& just  $GP(4m,3)$& ?\\
\hline
$A_5$& just  $GP(5m,2)$& just $GP(5m,3)$&?\\
\hline
$A_6$& no graphs & just $GP(2m,3)$& just $GP(2m,2t+1)$\\
\hline
\end{tabular}
\end{table}

\appendix




\end{document}